\documentclass[10pt]{amsart}

\usepackage[T1]{fontenc}
\usepackage{lmodern}
\usepackage{amssymb,microtype}
\usepackage[
  colorlinks=true,
  linkcolor=blue,
  citecolor=blue,
  urlcolor=blue
]{hyperref}

\newtheorem{theorem}{Theorem}
\newtheorem{lemma}[theorem]{Lemma}

\theoremstyle{remark}
\newtheorem{remark}[theorem]{Remark}

\newcommand{\Z}{\mathbb{Z}}
\newcommand{\Q}{\mathbb{Q}}
\newcommand{\C}{\mathbb{C}}
\newcommand{\R}{\mathbb{R}}
\newcommand{\defeq}{\stackrel{\mathrm{def}}{=}}
\newcommand{\Visible}{\mathrm{Visible}}
\newcommand{\Invisible}{\mathrm{Invisible}}

\hypersetup{
  pdftitle={Density one for lattice point visibility along polynomials with at least two distinct roots},
  pdfauthor={Abraham Lobsenz},
  pdfsubject={Lattice point visibility along polynomials},
  pdfkeywords={lattice point visibility, polynomial visibility, visible lattice points, density}
}

\title[Polynomial visibility has density one]{Density one for lattice point visibility along polynomials with at least two distinct roots}
\author{Abraham Lobsenz}
\address{Department of Mathematics, Dartmouth College, 6188 Kemeny Hall,
27 N. Main Street, Hanover, New Hampshire 03755-3551, USA}
\email{abe.e.lobsenz.29@dartmouth.edu}
\date{September 7, 2026}
\subjclass[2020]{Primary 11B05; Secondary 11H06, 11P21}
\keywords{Lattice point visibility, polynomial lines of sight, natural density, integer points}

\begin{document}

\begin{abstract}
We prove that every nonzero integer polynomial with at least two distinct
complex roots has lattice point visibility density one, resolving the
generalized form of the Visibility Density Conjecture for nonzero polynomials.
This extends the origin-passing case established by Chaubey, Pandey, and Regavim. Visibility is taken along the curves
$y=tF(x)$ with rational $t$, with a point visible if no positive lattice
point on the same curve has a smaller horizontal coordinate. The proof uses a greatest common divisor cutoff to reduce the problem to
finitely many equations of the form $F(b)=qF(a)$, with $0<q<1$;
for each equation, the positive integers $a$ admitting a positive integer
solution $b<a$ form a set of density zero. This
elementary argument removes the proper-power hypothesis of earlier work
without requiring estimates uniform in the ratio.
\end{abstract}

\maketitle

\section{Introduction}\label{sec:intro}
A classical question in number theory asks what proportion of integer lattice
points are visible from the origin along straight lines. In the positive
quadrant, visibility is equivalent to coprimality of the coordinates, giving
the density $6/\pi^2$. Following Chaubey--Pandey \cite{CP} and
Lobsenz--Phillips \cite{LP}, we ask the same question for polynomial lines
of sight $y=tF(x)$, with $t\in\Q$. We prove that every nonzero integral
polynomial with at least two distinct complex roots has visibility density one.

Theorem~\ref{thm:main} is accompanied by a complete Lean formalization
\cite{Lean}, with no admitted proofs or additional mathematical axioms.

\smallskip
We use the visibility definition and counting notation of
\cite[Definition~2.1 and \S2]{LP}.
For $F\in\Z[x]$, a point $(a,h)\in\Z_{>0}^2$ is \emph{visible along $F$}
if there is $t\in\Q$ with $h=tF(a)$ and $a$ is the least positive
integer $u$ for which $tF(u)$ is a positive integer. Otherwise it is
\emph{invisible along $F$}. A point $(b,k)\in\Z_{>0}^2$ \emph{blocks}
$(a,h)$ if $b<a$ and $h=tF(a)$, $k=tF(b)$ for the same $t\in\Q$.
For a positive integer $N$, write $[1,N]\defeq\{n\in\Z:1\leq n\leq N\}$, and put
\begin{equation}\label{eq:visible-invisible}
\begin{aligned}
 \Visible_F(N)&\defeq\{(a,h)\in[1,N]^2:(a,h)\text{ is visible along }F\},\\
 \Invisible_F(N)&\defeq\{(a,h)\in[1,N]^2:(a,h)\text{ is invisible along }F\}.
\end{aligned}
\end{equation}
These sets partition $[1,N]^2$. The \emph{visibility density} is
\begin{equation}\label{eq:density}
 D(F)\defeq\lim_{N\to\infty}\frac{\#\Visible_F(N)}{N^2},
\end{equation}
whenever the limit exists.

\begin{theorem}\label{thm:main}
Let $F\in\Z[x]\setminus\{0\}$ have at least two distinct complex roots.
Then $D(F)=1$; equivalently,
\[
                         \#\Invisible_F(N)=o(N^2).
\]
\end{theorem}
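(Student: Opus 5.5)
Since $\Invisible_F(N)$ is the complement of $\Visible_F(N)$ in $[1,N]^2$, it suffices to show $\#\Invisible_F(N)=o(N^2)$. We follow the abstract's outline. Fix an invisible point $(a,h)\in[1,N]^2$ with $F(a)\neq 0$; the points with $F(a)=0$ lie in $O(N)$ columns and contribute $O(N)$. The curve through $(a,h)$ is $t=h/F(a)$. A blocking point $(b,k)$ with $b<a$ satisfies $k=hF(b)/F(a)$, so $F(a)$ divides $hF(b)$. Put $g=\gcd(h,F(a))$ and $m=|F(a)|/g$. Then $F(b)\equiv 0\pmod m$, and $F(b)/F(a)=k/h>0$.

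We split according to the size of $g$. Fix a large cutoff $G$. Points with $g>G$ number at most
\[
\sum_{a\le N}\#\{h\le N: \gcd(h,F(a))>G\}\le \sum_{a\le N}\sum_{d\mid F(a),\,d>G} N/d .
\]
This is not obviously small, since $F(a)$ has many divisors. Instead we bound the proportion of pairs $(a,h)$ such that some $d>G$ divides both $h$ and $F(a)$. Summing over $d$ gives
\[
\sum_{d>G}(N/d)\cdot\#\{a\le N:d\mid F(a)\}\approx N^2\sum_{d>G}\rho_F(d)/d^2,
\]
where $\rho_F(d)$ is the number of roots of $F$ modulo $d$. Since $\rho_F(d)\ll_\epsilon d^\epsilon$ for $F$ with nonzero discriminant part, this sum converges, so the contribution is $\le \epsilon(G)N^2$ with $\epsilon(G)\to0$. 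Handling a repeated-root or content factor needs care: write $F=cF_1^{e_1}\cdots$ and use a $\rho$ bound for the squarefree part. It is safer, though, to bound $\#\{h\le N: g\mid h\}$ only for $g$ itself, which is what we do.

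For $g\le G$ we use the blocking relation. Here $k=hF(b)/F(a)$ and $k\ge1$ is an integer, so $F(b)/F(a)=k/h$. More usefully, $q\defeq F(b)/F(a)=k/h=(k/(h/g))/(F(a)/g)$. Since $m$ divides $F(b)$, write $F(b)=jm$, so $q=j/(\pm g)$ with $1\le |j|$. We also have $q<1$ asymptotically, because $|F(b)|<|F(a)|$ for $b<a$ once $a$ is large, the leading term dominating; if $b$ is small, $|F(b)|$ is bounded. Thus $q$ ranges over the finite set $\{j/g: 1\le j<g\le G\}$, up to sign and apart from boundedly many exceptional $a$. Hence every invisible $(a,h)$ with $g\le G$ and $a$ large has $a$ in the set
\[
A_q=\{a:\exists\, b<a,\ b\ge1,\ F(b)=qF(a)\}
\]
for one of finitely many $q$ with $0<|q|<1$. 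Points with $b$ bounded are included here as well.

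The key lemma is that each $A_q$ has density zero, and this is the main obstacle. The equation $F(b)=qF(a)$, or $gF(b)-jF(a)=0$, defines a plane curve $C_q$. If $C_q$ has no component of the form $b=\lambda a+\mu$ with $\lambda\ne1$, or more generally no component of genus $0$ with infinitely many integer points, then Siegel's theorem gives finitely many solutions. Elementarily, we can use the fact that for large $a$ the relation $F(b)=qF(a)$ forces $b=q^{1/n}a+c+O(1/a)$, where $n=\deg F$. Expanding with Puiseux series, $b$ is determined by $a$ up to at most $n$ branches. Each branch counts at most one $b$ per $a$, so integral solutions along a branch form a set whose $a$-values we must show are sparse.

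With $\lambda=q^{1/n}<1$, consider a branch $b=\lambda a+c_0+c_1a^{-1}+\cdots$. If $\lambda$ is irrational, integrality of $b$ with error $o(1)$ forces $\{\lambda a+c_0\}\to0$ along solutions, a density-zero condition by equidistribution. If $\lambda$ is rational, so that $F(\lambda x+c_0)=qF(x)$ would be needed identically for infinitely many solutions, the two-distinct-roots hypothesis enters: the affine map $x\mapsto\lambda x+c_0$ with $\lambda\neq1$ maps the root set of $F$ to itself. It therefore fixes the centroid of the roots, and its fixed point is unique, so it contracts all roots toward that point. This forces all roots to coincide, contradicting the hypothesis. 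So no branch is an identity, and the branch coefficients $c_i$ are not all zero beyond the polynomial part. This yields either finitely many $a$, via the nonzero $a^{-r}$ term being eventually non-integral after clearing the denominator of $\lambda$, or density zero. Summing over finitely many $q$ gives $o(N)$ columns, hence $o(N^2)$ points. Combining this with the $g>G$ bound and letting $G\to\infty$ completes the proof.
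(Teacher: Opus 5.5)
Your overall architecture matches the paper's. You discard the $O(N)$ bad columns and restrict to $\gcd(h,F(a))\le G$, which leaves only finitely many ratios $q=j/g\in(0,1)$. You then show each $\mathcal A_q(F)$ has density zero from the asymptotic $b=\lambda a+c_0+o(1)$, with a two-root contraction argument in the rational case. Your Puiseux and Weyl-equidistribution tools are heavier than the paper's elementary squeeze $F(\alpha a+\beta-\eta)<qF(a)<F(\alpha a+\beta+\eta)$ and its pigeonhole over blocks of $L+1$ integers, but that part is sound. Your rational case is correct though tersely phrased; the clean form is that $Db-ua$ is an integer within $o(1)$ of $Dc_0$, hence equal to it for all large solutions. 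Infinitely many solutions would then give infinitely many roots of $F(\lambda x+c_0)-qF(x)$.

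The gap is in the large-GCD step. You bound the pairs with GCD above $G$ by $N^2\sum_{d>G}\rho_F(d)/d^2$ and justify convergence with $\rho_F(d)\ll_\epsilon d^\epsilon$. That bound fails for the non-squarefree polynomials the theorem allows. For $F=x^2(x-1)$, every $x\equiv0\pmod{p^k}$ is a root modulo $p^{2k}$, so $\rho_F(p^{2k})\ge p^k$. You notice this, but your fallback of bounding $\#\{h\le N:g\mid h\}$ ``for $g$ itself'' is not a new argument. Counting each pair by its actual GCD $g>G$ gives exactly $\sum_{G<d\le N}\lfloor N/d\rfloor\,\#\{a\le N:d\mid F(a)\}$, the same sum. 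So the estimate is left unproved precisely in the repeated-root case.

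The repair is easy. For $p$ not dividing the content of $F$, each root modulo $p^j$ reduces to one of at most $\deg F$ roots modulo $p$. Hence $\rho_F(p^j)\le\deg F\cdot p^{j-1}$ and $\sum_{j\ge1}\rho_F(p^j)p^{-2j}\le 2\deg F/p^2$. The finitely many content primes have local factors at most $p/(p-1)$, so the Euler product for $\sum_d\rho_F(d)/d^2$ converges. Since $\#\{a\le N:d\mid F(a)\}\le 2\rho_F(d)N/d$ for $d\le N$, the pairs with GCD exceeding $G$ number at most $2N^2\sum_{d>G}\rho_F(d)/d^2$, uniformly in $N$.

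The paper's Lemma~\ref{lem:tight} avoids prime powers of $F(a)$ altogether. It uses only $\rho_F(p)\le\deg F$ for primes $p>K$. For the finitely many $p\le K$, it caps the $p$-part of $\gamma_F(a,h)$ through $h$ alone, since at most a proportion $p^{-E}$ of heights are divisible by $p^E$.
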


This proves the generalized form of the Visibility Density Conjecture stated in
\cite[Conjecture~1.1]{LP} for nonzero $F$, extending the origin-passing conjecture of
Chaubey--Pandey \cite[Conjecture~1.6]{CP}, resolved by Chaubey, Pandey,
and Regavim \cite{CPR}. Lobsenz--Phillips proved the assertion for
$F=f^m$ with $m\geq2$ and $f$ having at least two distinct roots
\cite[Theorems~1.2--1.3]{LP}, with quantitative bounds sharpened for
quadratic $f$ in \cite{LPQ}.
The nonzero condition is necessary: $F=0$ vanishes at every complex number
but has no visible positive lattice points, so the literal formulation of
\cite[Conjecture~1.1]{LP} needs this qualification.

\begin{remark}[Sharpness]
Nonzero constants have visibility density zero, since only points with $a=1$
are visible. Every nonconstant integer polynomial with just one distinct
root has the form $c(ux+v)^d$, where $c\in\Z\setminus\{0\}$,
$u,d\in\Z_{>0}$, $v\in\Z$, and $\gcd(u,v)=1$; in this case
\cite[\S1, equation~(7)]{LP} gives
\[
 D\bigl(c(ux+v)^d\bigr)=
 \prod_{\substack{p\text{ prime}\\p\nmid u}}\left(1-\frac{1}{p^{d+1}}\right)<1.
\]
Thus the two-root hypothesis is sharp, including in degree one.
\end{remark}

Earlier joint work used Pila's bound \cite{LP,Pila} and Pell equations
\cite[\S\S3--4]{LPQ} to count blocking pairs. Here a GCD cutoff reduces
the density question to finitely many fixed ratios, each handled by an
elementary argument. This removes the proper-power hypothesis; the earlier
quantitative estimates remain stronger in the cases they cover.

Ahuja \cite[Definition~3.1 and Lemma~3.2]{Ahuja} also uses the GCD of a
polynomial value and a height, giving a sufficient visibility criterion
when this GCD is one. Our cutoff allows all bounded GCD values and
controls the remaining points at arbitrarily small density cost.

\section{Arithmetic reduction}\label{sec:reduction}

Replacing $F$ by $-F$ leaves visibility unchanged, by replacing $t$ by
$-t$, as observed in \cite[\S1]{LP}. We may therefore assume that the
leading coefficient is positive. Following \cite[\S2]{LP}, choose
$n_F\geq1$ such that, whenever $a>n_F$,
\begin{equation}\label{eq:record}
             F(a)>\max\bigl(0,F(1),\ldots,F(a-1)\bigr).
\end{equation}
Such a choice is possible because $F$ is nonconstant (it has at least two
distinct roots), and a nonconstant polynomial with positive leading
coefficient is eventually strictly increasing and tends to infinity. The columns $a\leq n_F$ contain only $O_F(N)$ points.

Put $\gamma_F(a,h)\defeq\gcd(|F(a)|,h)$. Let $(a,h)$ be invisible
with $a>n_F$, and write $\gamma=\gamma_F(a,h)$. Its unique possible scale is
$t=h/F(a)>0$, and a blocking point $(b,k)$ satisfies
\[
 1\leq b<a,\qquad k=\frac{hF(b)}{F(a)}\in\Z_{>0},\qquad
 0<F(b)<F(a).
\]
As in the proof of \cite[Lemma~2.3]{LP}, the identity $hF(b)=kF(a)$ gives a
divisibility condition: since $F(a)/\gamma$ and $h/\gamma$ are coprime,
$F(a)/\gamma\mid F(b)$.
Thus $F(b)=jF(a)/\gamma$ for some integer $1\leq j<\gamma$.
Writing $j/\gamma=r/s$ in lowest terms gives
\begin{equation}\label{eq:finite-ratio}
       F(b)=\frac{r}{s}F(a),\qquad
       1\leq r<s,\quad\gcd(r,s)=1,\quad s\mid\gamma.
\end{equation}
In particular, bounding $\gamma$ leaves only finitely many ratios.

\section{The GCD cutoff}\label{sec:cutoff}
\begin{lemma}\label{lem:tight}
For every nonzero $F\in\Z[x]$ and every $\varepsilon>0$ there is an
integer $M\geq1$ such that, for every $N\geq1$,
\[
 \frac1{N^2}
 \#\{(a,h)\in[1,N]^2:\gamma_F(a,h)>M\}<\varepsilon.
\]
\end{lemma}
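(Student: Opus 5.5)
The plan is to count pairs by a common divisor $d>M$ of $|F(a)|$ and $h$, and to bound the number of residues $a\bmod d$ with $d\mid F(a)$. The resulting sum has a tail that is uniformly small.

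\emph{Reductions.} Write $F=cG$ with $c\in\Z\setminus\{0\}$ and $G\in\Z[x]$ primitive, and put $n=\deg G$.

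If $n=0$, then $G=\pm1$. Hence $\gamma_F(a,h)\le|c|$ always, and any $M\ge|c|$ makes the set empty.

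Assume $n\ge1$. Since $\gcd(|F(a)|,h)\le|c|\gcd(|G(a)|,h)$, the condition $\gamma_F(a,h)>M$ forces $\gamma_G(a,h)>M/|c|$. So it suffices to prove the lemma for primitive $G$ and then rescale $M$ by $|c|$.

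No separate treatment of the finitely many integer roots of $G$ is needed: a root $a$ satisfies $d\mid G(a)$ for every $d$, and it is automatically counted among the solutions modulo $d$.

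\emph{Counting.} If $\gamma_G(a,h)>M$, then $d=\gamma_G(a,h)$ satisfies $M<d\le h\le N$, $d\mid h$ and $d\mid G(a)$. Let $\rho(d)=\#\{x\bmod d: d\mid G(x)\}$. The number of $a\in[1,N]$ with $d\mid G(a)$ is at most $\rho(d)(N/d+1)$, and the number of $h\le N$ with $d\mid h$ is at most $N/d$. Summing over $d$ gives
\[
\frac{1}{N^2}\#\{(a,h)\in[1,N]^2:\gamma_G(a,h)>M\}\le\sum_{M<d\le N}\rho(d)\Bigl(\frac1{d^2}+\frac1{dN}\Bigr).
\]

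\emph{The key input.} The main obstacle is a uniform bound of the form
\[
\rho(d)\le C_G\,d^{1-\delta}\qquad\text{for some }\delta=\delta(n)>0.
\]
I would derive it as follows.

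\begin{itemize}
\item $\rho$ is multiplicative by the Chinese remainder theorem.
\item For prime powers, the classical estimate of Stewart/Konyagin type for primitive polynomials applies: $\rho(p^k)\le n\,p^{k(1-1/n)}$, or some bound with a fixed constant in place of $n$. Primitivity is essential here, since it rules out $G\equiv0\pmod p$.
\item Multiplying the prime-power bounds gives $\rho(d)\le n^{\omega(d)}d^{1-1/n}$.
\item Since $n^{\omega(d)}\ll_\eta d^{\eta}$ for every $\eta>0$, taking $\eta=1/(2n)$ yields $\delta=1/(2n)$.
\end{itemize}

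If citing the prime-power bound is not acceptable, I would prove it via Hensel lifting and the discriminant or resultant of the squarefree part of $G$. That would be the technical heart of the argument.

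\emph{Conclusion.} With this bound, the right-hand side is at most
\[
C_G\sum_{d>M}d^{-1-\delta}+\frac{C_G}{N}\sum_{d\le N}d^{-\delta}\ll_G M^{-\delta}+N^{-\delta}.
\]
Because the sum is empty unless $N>M$, this is $\ll_G M^{-\delta}$ uniformly in $N$. It is therefore $<\varepsilon$ once $M$ is large enough.

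The requirement that the bound hold for every $N\ge1$, including small $N$, is thus handled automatically by the emptiness of the sum when $N\le M$.
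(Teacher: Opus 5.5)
Your proof is correct, but it takes a different route from the paper's.

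\emph{Your route.} You remove the content of $F$ and bound the local density $\rho(d)$ for every modulus $d$. This uses a prime-power root count together with $n^{\omega(d)}\ll_\eta d^\eta$, and then you sum over $d=\gamma$.

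\emph{The paper's route.} The paper never counts roots modulo prime powers. It fixes a prime cutoff $K$. For $p>K$ it uses only Lagrange's bound, namely at most $\deg F$ roots modulo $p$ when $p$ does not divide all coefficients. This makes the primes $p>K$ contribute a proportion at most $2\deg F/K$. The primes $p\le K$ are handled purely through the height, using $\gamma_F(a,h)\mid h$. Outside the proportion $\sum_{p\le K}p^{-E}$ of heights divisible by some $p^E$, one has $\gamma_F(a,h)\le\prod_{p\le K}p^{E-1}$.

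\emph{What each buys.} The paper's split makes your reduction to primitive $G$ unnecessary, since primes dividing the content are just small primes. It also keeps everything elementary, which suits the Lean formalization. The price is that the paper's $M$ is exponentially large in $1/\varepsilon$. Your uniform bound $\ll_G M^{-1/(2n)}$ allows $M$ polynomial in $1/\varepsilon$. This is irrelevant for density one, but it is a genuinely stronger statement.

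\emph{The key input.} The prime-power bound you cite holds exactly as you state it for any $G$ with content prime to $p$. You do not need Hensel lifting or discriminants, which would be awkward when $G$ is not squarefree. Factor $G=c_n\prod_i(x-\theta_i)$ over $\C_p$. Primitivity and the Newton polygon give $v_p(c_n)+\sum_{v_p(\theta_i)<0}v_p(\theta_i)=0$. Hence $v_p(G(x))=\sum_{v_p(\theta_i)\ge0}v_p(x-\theta_i)$ for $x\in\Z_p$. So $v_p(G(x))\ge k$ forces $v_p(x-\theta_i)\ge k/n$ for some $i$. Each such set of $x$ lies in a single class modulo $p^{\lceil k/n\rceil}$. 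This gives $\rho(p^k)\le n\,p^{k-\lceil k/n\rceil}\le n\,p^{k(1-1/n)}$.
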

\begin{proof}
Write $d=\deg F$. Choose an integer $K\geq2$ large enough that no prime
$p>K$ divides every coefficient of $F$ and $2d/K<\varepsilon/2$.
Such a choice is possible since $F$ has a nonzero coefficient, so only
finitely many primes divide every coefficient of $F$. For any prime $p>K$, the congruence $F(a)\equiv0\pmod p$
has at most $d$ residue classes. Hence
\[
 \#\{(a,h)\in[1,N]^2:p\mid F(a),\ p\mid h\}
 \leq d\left(\frac Np+1\right)\frac Np.
\]
Any prime dividing $h$ is at most $N$, and for $p\leq N$ we have
$N/p+1\leq2N/p$. Summing over $K<p\leq N$ and enlarging the sum to
all integers gives the bound, uniform in $N$,
\begin{equation}\label{eq:large-primes}
 \frac{\#\{(a,h)\in[1,N]^2:\exists p>K,\ p\mid\gamma_F(a,h)\}}{N^2}
 \leq 2d\sum_{n>K}\frac1{n^2}\leq\frac{2d}{K}.
\end{equation}
For any fixed positive integer $E$,
the proportion of heights divisible by $p^E$ is at most $p^{-E}$.
Outside a set of pairs of proportion at most
$\sum_{p\leq K}p^{-E}$, none of these small-prime powers divides $h$.
If neither exceptional event occurs, then
\[
       \gamma_F(a,h)\leq M\defeq\prod_{p\leq K}p^{E-1}.
\]
Finally, choose $E$ with $\sum_{p\leq K}p^{-E}<\varepsilon/2$. Combining the two exceptional
sets proves the claim.
\end{proof}

\section{Fixed ratios are sparse}\label{sec:ratios}
\begin{lemma}\label{lem:sparse}
Let $F\in\R[x]$ have positive leading coefficient and at least two
distinct complex roots. For each $q\in(0,1)$, the set
\[
 \mathcal A_q(F)\defeq
 \{a\in\Z_{>0}:\exists b\in\Z_{>0},\ b<a,\ F(b)=qF(a)\}
\]
has natural density zero.
\end{lemma}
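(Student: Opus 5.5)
The plan is to show that any solution with $a$ large forces $b$ to lie very close to a fixed affine function of $a$. Then a Diophantine-approximation argument for irrational slope, and a polynomial-identity argument for rational slope, will give density zero.

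\emph{Step 1: asymptotics.} Since $F$ has at least two distinct roots, $d=\deg F\geq 2$. Write $F(x)=c(x+\beta)^d+R(x)$ with $c>0$, $\beta=c_{d-1}/(dc)$, and $\deg R\leq d-2$. Put $\lambda\defeq q^{1/d}\in(0,1)$, the positive real $d$-th root. If $F(b)=qF(a)$ with $b<a$, then $F(b)=qF(a)\to\infty$ as $a\to\infty$. Hence $b\to\infty$, and $b\asymp a$. Dividing the equation by $c$ gives $(b+\beta)^d=\lambda^d(a+\beta)^d+O(a^{d-2})$. Taking $d$-th roots, with the leading term of size $a^{d-1}$ in the mean value theorem, gives
\[
 b=\lambda a+\kappa+O(1/a),\qquad \kappa\defeq(\lambda-1)\beta .
\]
So for $a$ beyond some $a_0(\delta)$, every $a\in\mathcal A_q(F)$ satisfies $\|\lambda a+\kappa\|<\delta$, where $\|\cdot\|$ is the distance to the nearest integer.

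\emph{Step 2: irrational $\lambda$.} By Weyl equidistribution of $(\lambda a+\kappa)$ modulo $1$, the set of $a$ with $\|\lambda a+\kappa\|<\delta$ has density $2\delta$. Hence the upper density of $\mathcal A_q(F)$ is at most $2\delta$ for every $\delta>0$, so it is zero.

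\emph{Step 3: rational $\lambda=u/v$ in lowest terms.} Now $vb-ua$ is an integer equal to $v\kappa+O(1/a)$. So for $a$ large either there are no solutions (if $v\kappa\notin\Z$), or $vb-ua=m\defeq v\kappa$ is a fixed integer. In the latter case $a$ is a root of the polynomial
\[
 P(x)\defeq F\bigl((ux+m)/v\bigr)-qF(x).
\]
If $P\not\equiv0$, then only finitely many $a$ qualify, and we are done.

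The main obstacle is excluding $P\equiv 0$, i.e.\ the identity $F(\varphi(x))=qF(x)$ with $\varphi(x)=\lambda x+m/v$. I plan to derive a contradiction from the two-root hypothesis. If $F(r)=0$, then $F(\varphi(r))=0$, so the injective map $\varphi$ sends the finite root set $Z\subset\C$ into itself, hence permutes it. Then some iterate $\varphi^k$ fixes every element of $Z$. But $\varphi^k(x)=\lambda^k x+\text{const}$ with $\lambda^k\neq1$, so $\varphi^k$ has exactly one fixed point in $\C$. This forces $\#Z=1$, contradicting the hypothesis.

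Combining Steps 2 and 3 shows that $\mathcal A_q(F)$ has natural density zero in all cases.
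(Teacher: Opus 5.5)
Your proof is correct and follows the paper's overall plan. You approximate solutions by the line $b\approx\lambda a+\kappa$ (your $\kappa=(\lambda-1)c_{d-1}/(dc)$ is exactly the paper's $\beta$), split on whether $\lambda=q^{1/d}$ is rational, and in the rational case reduce to the identity $F(\lambda x+\kappa)=qF(x)$, which you refute by the root-permutation argument. The tools differ in three places.

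For the asymptotic, you complete the $d$-th power and apply the mean value theorem, which gives the explicit rate $O(1/a)$. The paper needs only the qualitative statement. It notes that $F(\alpha x+\beta\pm\eta)-qF(x)$ has degree $d-1$ with leading coefficient $\pm dc_d\alpha^{d-1}\eta$, sandwiches $qF(a)$, and then uses eventual monotonicity of $F$.

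In the irrational case you invoke Weyl equidistribution of $\lambda a+\kappa$ modulo $1$. The paper instead uses a pigeonhole separation argument. It sets $\delta_L=\min_{1\le j\le L}\operatorname{dist}(j\alpha,\Z)>0$. Two large elements $a<a'$ of $\mathcal A_q(F)$ with $a'-a\le L$ would give $\operatorname{dist}((a'-a)\alpha,\Z)<2\delta_L/3$, a contradiction. So each block of $L+1$ consecutive large integers contains at most one element, and the upper density is at most $1/(L+1)$. Your version is a one-line appeal to a standard theorem. The paper's version uses nothing beyond $j\alpha\notin\Z$ for $1\le j\le L$, and the offset plays no role there. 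This keeps the whole proof elementary, as the paper advertises.

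Finally, your contradiction via the unique fixed point of $\varphi^k$ and the paper's via $\operatorname{diam}(R_F)=\alpha\operatorname{diam}(R_F)$ are interchangeable. Both say that a contracting affine map cannot permute a set of two or more points.
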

\begin{proof}
Put $d=\deg F\geq2$. Write
\[
 F(x)=c_d x^d+c_{d-1}x^{d-1}+O_F(x^{d-2}),\quad
 \alpha=q^{1/d}\in(0,1),\quad
 \beta=\frac{c_{d-1}(\alpha-1)}{dc_d}.
\]
For any fixed $\eta>0$, the polynomials $F(\alpha x+\beta\pm\eta)-qF(x)$ have degree $d-1$ and
leading coefficients $\pm dc_d\alpha^{d-1}\eta$. Consequently, for large $a$,
\[
 F(\alpha a+\beta-\eta)<qF(a)<F(\alpha a+\beta+\eta).
\]
For any fixed integer $B\geq1$, we have
$qF(a)>\max_{1\leq j\leq B}F(j)$ for all sufficiently large $a$,
so every solution then has $b>B$.
Also $\alpha a+\beta-\eta\to\infty$, so $b$ and both comparison
arguments eventually lie in the range where $F$ is strictly increasing.
The displayed inequalities therefore give
$\alpha a+\beta-\eta<b<\alpha a+\beta+\eta$.
As $\eta$ is arbitrary, this proves the following uniform statement:
\begin{equation}\label{eq:asymptote}
\begin{aligned}
 &\forall\eta>0\ \exists a_0\geq1\ \forall a,b\in\Z_{>0}:\\
 &\qquad a>a_0,\quad b<a,\quad F(b)=qF(a)
       \quad\Longrightarrow\quad |b-\alpha a-\beta|<\eta.
\end{aligned}
\end{equation}

Suppose first that $\alpha$ is irrational. Fix an integer $L\geq1$ and set
\[
 \delta_L=\min_{1\leq j\leq L}\operatorname{dist}(j\alpha,\Z)>0.
\]
By \eqref{eq:asymptote}, beyond a fixed threshold every solution pair
satisfies $|b-\alpha a-\beta|<\delta_L/3$. Two such solutions with
$1\leq a'-a\leq L$ would give
\[
 \operatorname{dist}\bigl((a'-a)\alpha,\Z\bigr)
 \leq |(b'-b)-\alpha(a'-a)|<2\delta_L/3,
\]
a contradiction. Thus every block of $L+1$ consecutive sufficiently
large integers contains at most one member of $\mathcal A_q(F)$.
Its upper density is at most $1/(L+1)$; letting $L\to\infty$ proves
the assertion.

Suppose next that $\alpha$ is rational. Choose an integer $D\geq1$
such that $D\alpha\in\Z$. Then $b-\alpha a\in D^{-1}\Z$.
Equation \eqref{eq:asymptote} shows that, if arbitrarily large solutions
exist, necessarily $\beta\in D^{-1}\Z$ and all sufficiently large
solutions satisfy $b=\alpha a+\beta$ exactly. If infinitely many
solutions existed, the polynomial \(F(\alpha x+\beta)-qF(x)\) would have infinitely many roots. Since a nonzero polynomial has only finitely many roots, this would imply
\[
                   F(\alpha x+\beta)=qF(x)\qquad\text{for every }x\in\C.
\]

We now use the root-permutation argument from the proof of
\cite[Proposition~3.3]{LP}, in its contraction form.
Let $R_F\defeq\{\rho\in\C:F(\rho)=0\}$ and $T(z)=\alpha z+\beta$.
The identity implies $T(R_F)\subseteq R_F$.
As $T$ is injective and $R_F$ finite, $T(R_F)=R_F$. But
\[
          \operatorname{diam}(R_F)=\operatorname{diam}(T(R_F))
                              =\alpha\operatorname{diam}(R_F).
\]
Since $0<\alpha<1$, the diameter is zero, contrary to the existence of
two distinct roots. In this case $\mathcal A_q(F)$ is in fact finite.
\end{proof}

\begin{remark}
For polynomials of degree at least two, the two-distinct-roots hypothesis
is used only in the rational-$\alpha$ case of Lemma~\ref{lem:sparse}.
\end{remark}

\section{Completion of the proof}
\begin{proof}[Proof of Theorem~\ref{thm:main}]
Fix $\varepsilon>0$ and choose $M$ from Lemma~\ref{lem:tight}.
Let
\[
       \mathcal A_M(F)\defeq
       \bigcup_{2\leq s\leq M}\ \bigcup_{\substack{1\leq r<s\\\gcd(r,s)=1}}
                       \mathcal A_{r/s}(F).
\]
This is a finite union of sets of density zero by Lemma~\ref{lem:sparse},
so $\#(\mathcal A_M(F)\cap[1,N])=o_{F,M}(N)$.
By \eqref{eq:finite-ratio}, every invisible point in $[1,N]^2$ either
has $a\leq n_F$, has $\gamma_F(a,h)>M$, or has $a\in\mathcal A_M(F)$.
Consequently,
\[
 \#\Invisible_F(N)
 \leq n_FN+\varepsilon N^2+N\,\#(\mathcal A_M(F)\cap[1,N]).
\]
It follows that
\[
 \limsup_{N\to\infty}\frac{\#\Invisible_F(N)}{N^2}
 \leq 0+\varepsilon+
       \lim_{N\to\infty}\frac{\#(\mathcal A_M(F)\cap[1,N])}{N}
 =\varepsilon.
\]
Since $\varepsilon$ is arbitrary and the left side is nonnegative,
$\#\Invisible_F(N)=o(N^2)$, and hence $D(F)=1$.
\end{proof}

\section*{Formalization}
The formalization \cite{Lean} uses Lean~4.28.0 and mathlib~v4.28.0
(\href{https://github.com/leanprover-community/mathlib4/tree/8f9d9cff6bd728b17a24e163c9402775d9e6a365}{commit \texttt{8f9d9cf}}),
with a slightly coarser GCD cutoff bound yielding the same density theorem.
The verified version is
\href{https://github.com/abelobsenz/polynomial-visibility-lean/tree/6f83b9e899f2527898601fc3e028e2c016459ae5}{commit \texttt{6f83b9e}}
of the repository:\par
\noindent\url{https://github.com/abelobsenz/polynomial-visibility-lean}.

\section*{Acknowledgments}

The author thanks Tristan Phillips for his mentorship in the development of
the joint papers \cite{LP,LPQ}, whose ideas inform the present work. 

Additionally, the author acknowledges the help of AI (GPT-6-Astra) in proof discovery, Lean formalization, and manuscript drafting. The author assumes all responsibility for the arguments laid out here.

\end{document}